\newtheorem{thm}{Theorem}[section]
\newtheorem{lem}[thm]{Lemma}
\theoremstyle{definition}
\newtheorem{rem}[thm]{Remark}
\numberwithin{equation}{section}
\def\epsilon{\varepsilon}
\begin{document}
\allowdisplaybreaks
\title{No pure capillary solitary waves exist in 2D finite depth }

\author{ Mihaela Ifrim}
\address{Department of Mathematics\\
University of Wisconsin - Madison
} \email{ifrim@wisc.edu}

\author{Ben Pineau}
\address{Department of Mathematics\\
University of California at Berkeley
} \email{bpineau@berkeley.edu}
\author{Daniel Tataru}
\address{Department of Mathematics\\
University of California at Berkeley
} \email{tataru@math.berkeley.edu}
\author{Mitchell A.\ Taylor}
\address{Department of Mathematics\\
University of California at Berkeley
} \email{mitchelltaylor@berkeley.edu}

 \begin{abstract}
We prove that the 2D finite depth capillary water wave equations admit no solitary wave solutions. This closes the existence/non-existence problem for solitary water waves in 2D, under the classical assumptions of incompressibility and irrotationality, and with the physical parameters being gravity, surface tension and the fluid depth. 
\end{abstract} 

 \subjclass{76B15; 76B25; 76B45}
 \keywords{capillary waves, solitary waves, water waves, holomorphic coordinates}
 
 \maketitle
 
 \bigskip

 \section{Introduction}

Solitary water waves are localized disturbances of a fluid surface which travel at constant speed and with a fixed profile. Such waves were  first observed by Russell in the mid-19th century \cite{Rus44}, and are fundamental features of many water wave models. The objective of this paper is to  settle the existence/non-existence problem for the full irrotational water wave system in 2D, with the physical parameters being gravity, surface tension, and the fluid depth. Five of the six combinations have already been dealt with, and the results are summarized in Table 1 - it is our intent to fill in the missing case.
\\

\begin{center}
\begin{tabular}{ |p{3cm}|p{3cm}|p{3cm}|p{3cm}|p{3cm}|  }

 \multicolumn{4}{c}{Table 1: Existence of 2D solitary waves in irrotational fluids} \\
 \hline
Gravity   & Capillarity & Depth & Existence\\
 \hline
 Yes   & Yes    & Infinite & Yes\\
  Yes   & No    & Infinite & No\\
   No   & Yes    & Infinite & No\\
  Yes   & Yes    & Finite & Yes\\
   Yes   & No    & Finite & Yes\\
    No   & Yes    & Finite & Unknown\\

 \hline
\end{tabular}
\end{center}
\medskip
In a nutshell, our result can be loosely formulated as follows:
\begin{thm}\label{informal}
No solitary waves exist in finite depth for the pure capillary irrotational water wave problem in 2D, even without the assumption that the free surface is a graph.
\end{thm}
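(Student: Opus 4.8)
The plan is to pass to a moving frame traveling at the (unknown) wave speed $c$, in which a solitary wave becomes a steady solution of the irrotational, incompressible Euler equations with a free top boundary governed by the capillary (Laplace--Young) condition and a flat rigid bottom at depth $h$. Because the statement must allow non-graph interfaces, I would not parametrize the surface as a graph $y=\eta(x)$; instead I would use holomorphic (conformal) coordinates, mapping the fluid region conformally onto a strip $\{-h_* < \mathrm{Im}\,\zeta < 0\}$ and recording the trace of the conformal map on the top boundary as $Z(\alpha)=\alpha+W(\alpha)$, together with the holomorphic velocity potential $Q$. In these variables the steady water wave system closes into a system of equations for $(W,Q)$ on the line, the rigid bottom enters through the strip analogue of the Hilbert transform, and the solitary wave hypothesis becomes the requirement that $(W,Q)\to 0$ as $\alpha\to\pm\infty$.

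Next I would assemble the exact conservation laws available for steady flows, each of which is an $\alpha$-derivative of some expression: conservation of mass (flux), of horizontal momentum (the ``flow force'' $S$), and of energy. Integrating each over $\alpha\in\mathbb{R}$ and using the decay of $(W,Q)$ to annihilate the boundary contributions at $\pm\infty$, I would obtain a family of global scalar identities relating the speed $c$, the depth $h$, the surface tension $\sigma$, and quadratic-and-higher moments of the solution. The capillary condition contributes the curvature of $Z$, which enters these identities as a sign-definite surface-energy term; this positivity is the structural feature I would try to exploit.

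The core of the argument is then to combine these identities into a single relation whose right-hand side is forced to be both zero (by the conservation laws and decay) and strictly positive unless $W\equiv Q\equiv 0$. Heuristically this positivity is dictated by the capillary dispersion relation $c^2(k)=\sigma k\tanh(kh)$, which increases strictly from $0$ and so has no interior extremum: every admissible speed $c>0$ resonates with the linear spectrum, leaving no spectral gap in which a genuine solitary wave could be localized. To make this rigorous I would look for a quantity built from $(W,Q)$ that is the boundary trace of a function holomorphic on the strip, whose values on the bottom have a fixed phase (from the no-penetration condition) and whose values on the top have a definite sign (from the traveling-wave and capillary relations); a Phragm\'en--Lindel\"of/maximum-principle argument on the strip, together with the decay at $\pm\infty$, then forces this function to vanish, and hence the wave to be trivial.

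The main obstacle I anticipate is that surface tension makes the free boundary condition genuinely quasilinear and, unlike the pure gravity case, denies us a naive maximum principle: the convective terms in the steady Bernoulli law have indefinite sign and can a priori compete with the coercive curvature term. Moreover, finite depth breaks the dilation symmetry, so the clean Pohozaev identity available in the infinite-depth problems is unavailable and must be replaced by the depth-adapted flow-force and flux identities above. The delicate step is therefore to prove the coercivity of the combined functional --- that no cancellation can occur between the capillary term and the kinetic terms for a nontrivial decaying solution --- uniformly in the unknown amplitude and without assuming the free surface is a graph.
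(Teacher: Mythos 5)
Your setup coincides with the paper's: pass to a moving frame, conformally map the fluid domain onto the strip, work with the boundary variables $(W,Q)$ and their decay at infinity, with the Tilbert transform $\mathcal{T}_h$ encoding the flat bottom. From there, however, your plan stops short of the two steps that actually constitute the proof. The first missing idea is the reduction: the first steady equation forces $Q_\alpha=cW_\alpha$ as in \eqref{First solitary simple}, and since $1+W_\alpha$ is holomorphic and non-vanishing on the strip (and positive on the bottom) one may take a holomorphic logarithm $U+iV=\log(1+W_\alpha)$ as in \eqref{log}; with $g=0$ the entire free-boundary system then collapses to the single real scalar equation $\mathcal{T}_h U_\alpha=2c^2\sinh U$ of \eqref{No solutions}. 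None of the mass/flow-force/energy identities you list is used, and without this reduction the ``combined functional'' you hope to assemble is never exhibited: you state its desired property (simultaneously zero and strictly positive unless the wave is trivial) but produce no candidate for it.

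The second gap is the mechanism for positivity. You propose a Phragm\'en--Lindel\"of/maximum-principle argument for some holomorphic quantity with fixed phase on the bottom and fixed sign on top, but you never construct that quantity, and you yourself note that the capillary terms ``deny a naive maximum principle'' --- so as written this step would fail. The paper's mechanism is purely integral and lives on the Fourier side: multiply \eqref{No solutions} by $-\chi_r U_\alpha$ for a cutoff $\chi_r$, use the Tilbert product rule and skew-adjointness to get
\[
-\int_{\mathbb{R}}\chi_r U_\alpha\,\mathcal{T}_h U_\alpha\,d\alpha=\frac{1}{2}\int_{\mathbb{R}}\left(|U_\alpha|^2-|\mathcal{T}_h U_\alpha|^2\right)\mathcal{T}_h\chi_r\,d\alpha,
\]
and then exploit the fact that $\mathcal{T}_h$ agrees with $-h\partial_\alpha$ at low frequency to third order, so that the error $(\mathcal{T}_h+h\partial_\alpha)\chi_r$ contributes only $\mathcal{O}(r^{-1/2})$. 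Letting $r\to\infty$ yields
\[
2c^2\int_{\mathbb{R}}(\cosh U-1)\,d\alpha=-\frac{h}{2}\int_{\mathbb{R}}\xi^2\,\mathrm{sech}^2(h\xi)\,|\widehat{U}|^2\,d\xi\le 0,
\]
where the left side is nonnegative pointwise, forcing $U\equiv 0$. The decisive positivity is the spectral fact $1-\tanh^2(h\xi)=\mathrm{sech}^2(h\xi)\ge 0$, not a pointwise principle on the strip, and the finite-depth term $\frac{h}{2}\int(|U_\alpha|^2-|\mathcal{T}_hU_\alpha|^2)\chi'(\alpha/r)\,d\alpha$ --- which has no analogue in the infinite-depth argument of \cite{IT19}, where the commutator structure of the Hilbert transform suffices --- is exactly what survives the limit with the favorable sign. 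In short, what you defer as ``the delicate coercivity step'' is not a technical remainder; it is the entire content of the theorem, and your outline contains no device that would supply it.
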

A more precise formulation of the result is given later, in Theorem~\ref{Main}.

\subsection*{Historical perspectives} The mathematical study of travelling waves has been a fundamental - and longstanding - problem in fluid dynamics. Perhaps the first rigorous construction of 2D finite depth pure gravity solitary waves occurred in \cite{FriedrichsHyers54, Lavrentiev43}; further refinements can be found in \cite{Beale77, Mielke88}. Solitary waves with large  amplitudes were first constructed by Amick and Toland~\cite{AmickToland81b} in 1981 using global bifurcation techniques, leading to the existence of a limiting extreme wave with an angled crest~\cite{AmickFraenkelToland82}; see also~\cite{AmickToland81a,BenjaminBonaBose90, MR513927}. By now, a vast literature exists on this subject, including both results for gravity and for gravity-capillary waves (\cite{MR963906,MR1720395,MR1378603,MR2867413,
MR1949969,MR2379653, MR1133302, MR2263898}). For water waves in deep water, solitary waves
have been proved to exist provided that both gravity and
surface tension are present, see \cite{MR2073504,MR2069635,MR2847283, MR1423002}, 
following numerical work in 
\cite{MR988299,MR990170}. 
\\

The non-existence of 2D pure gravity solitary waves in infinite depth was originally proved in \cite{Hur},  under certain decay assumptions. The proof uses conformal mapping techniques, and the decay assumptions ultimately stem from difficulties in estimating commutators involving the Hilbert transform. The decay assumptions were completely removed in \cite{IT19}, as the authors were able to effectively deal with the aforementioned commutator issues - see \cite[Lemma 3.1]{IT19}. 
\\

The proof of our result is loosely based on the ideas of \cite{IT19}. The key difference is that the Tilbert transform (see \Cref{Sec} for the definition) does not enjoy the same commutator structure as the Hilbert transform. More precisely, we cannot simply replace Hilbert transforms with Tilbert transforms in \cite[Lemma 3.1]{IT19}. To circumvent this, we morally view the Tilbert transform as the Hilbert transform at high frequency, and a derivative at low frequency, and use these distinct regimes to close our argument. 
\\

For context, we mention that the problem we are considering in this article goes at least as far back as \cite{GS72}. More specifically, in \cite{GS72} it is noted that the systematic existence methods developed in \cite{F48, FriedrichsHyers54, K48} for the pure gravity problem in shallow water are unable to produce pure capillary solitary waves, but can be modified to produce gravity-capillary solitary waves. One may contrast the question of existence of solitary waves
with that of the existence of periodic travelling waves. 
Indeed, for pure capillary irrotational waves
in both finite and infinite depth, periodic travelling waves
are known to exist. These are called Crapper waves, 
and are quite explicit; see \cite{MR91075,MR475234}
for the original results, and also the survey in \cite{MR1869386}. Interestingly, the free surfaces of the Crapper waves need not be graphs, which makes the lack of graph assumption in \Cref{informal} essential. The reader is referred to \cite{MR3145074, Crapper-pert,  MR2999400, MR3020910, MR2292728} for further literature on pure capillary waves, as well as gravity-capillary perturbations of the Crapper waves.
\\

Finally, we mention two recent directions that are somewhat outside the scope of this paper. The first is the study of steady water waves with vorticity, for which we refer the interested reader to the surveys~\cite{groves:survey, strauss:survey}. As mentioned, our non-existence proof utilizes holomorphic coordinates, a technique which is not compatible with variable vorticity. However, such a restriction is quite natural, as heuristics dictate that one should expect solitary waves in problems with, say, constant non-zero vorticity.  The other interesting direction - in situations where solitary waves are known to exist - is to determine which speeds are capable of sustaining solitary waves. Recently, it was shown in \cite{KLW20} that all finite depth, irrotational, pure gravity solitary waves must obey the inequality $c^2>gh.$ Here $c$ is the speed, $g$ the gravitational constant, and $h$ the asymptotic depth. Heuristically, this result says that speeds that are precluded by the linearized problem are also precluded in the nonlinear problem.

\subsection*{Acknowledgements}Mihaela Ifrim was supported by a Luce Assistant Professorship, by the Sloan Foundation, and by an NSF CAREER grant DMS-1845037. Daniel Tataru was supported by the NSF grant DMS-1800294 as well as by a Simons Investigator grant from the Simons Foundation.  This material is also based upon work supported by the National Science Foundation under Grant No. DMS-1928930 while all four authors participated in the program \textit{Mathematical problems in fluid dynamics} hosted by the Mathematical Sciences Research Institute in Berkeley, California, during
the Spring 2021 semester.

\section{The equations in Eulerian coordinates}
 We consider the incompressible, finite depth water wave equations in two space dimensions. The motion of the water is governed by the incompressible Euler equations, with boundary conditions on the water surface and the  flat, finite bottom. We emphasize that this section is purely for motivational purposes, and is not the formulation we will use to prove our non-existence result. In particular, for simplicity, this subsection assumes that $\Gamma(t)$ is a graph, but we will \emph{not} assume this when working  with the holomorphic formulation of our problem.
\\

To describe the equations, denote the water domain at time $t$ by $\Omega(t)\subseteq \mathbb{R}^2$; we assume that $\Omega(t)$ has a flat finite bottom $\{y=-h\}$, and let $\eta(x,t)$ denote the height of the free surface as a function of the horizontal coordinate:
\begin{equation}\label{Domain}
    \Omega(t)=\{(x,y)\in \mathbb{R}^2 : -h<y<\eta(x,t)\}.
\end{equation}
The free surface of the water at time $t$ will be denoted by $\Gamma(t)$. As we are interested in solitary waves, we think of $\Gamma(t)$ as being asymptotically flat at infinity to $y\approx 0$. Since the 2D finite depth capillary water wave equations do permit periodic travelling waves, this decay at infinity will factor heavily into our proof, even though we do not impose any specific  rate of decay. 
\\

We denote by $u$ the fluid velocity and by $p$ the pressure. The vector field $u$ solves Euler's equations inside $\Omega(t),$

\begin{equation} \label{Euler} 
\left\{
\begin{aligned}
	& u_t+u\cdot \nabla u=-\nabla p-gj,\\
	& \text{div}\ u=0,\\
	& u(0,x)=u_0(x),
\end{aligned}\right.
\end{equation}
and the bottom boundary is impenetrable:
\begin{equation}
  u\cdot j=0 \ \ \text{when}\ y=-h.
    \end{equation}
On the upper boundary the atmospheric pressure is normalized to zero and we have the dynamic boundary condition
\begin{equation}
    p=-\sigma\textbf{H}(\eta) \ \ \ \text{on} \ \Gamma(t),
\end{equation}
and the kinematic boundary condition 
\begin{equation}
    \partial_t+u\cdot \nabla \ \ \text{is tangent to} \ \bigcup \Gamma(t).
\end{equation}
Here $g\geq 0$ represents the gravity, \begin{equation}\label{curvature} \textbf{H}(\eta)=\partial_x\left(\frac{\eta_x}{\sqrt{1+\eta_x^2}}\right)
\end{equation} is the mean curvature of the free boundary, and $\sigma> 0$ represents the surface tension coefficient. 
\\

We adhere to the classical assumption that the flow is irrotational, so we can write $u$ in terms of a velocity potential $\phi$ as $u=\nabla \phi$. It is easy to see that $\phi$ is a harmonic function whose normal derivative is zero on the bottom. Thus, $\phi$ is determined by its trace $\psi=\phi|_{\Gamma(t)}$ on the free boundary $\Gamma(t).$ Under these assumptions, it is well-known that the fluid dynamics can be expressed in terms of a one-dimensional evolution of the pair of variables $(\eta,\psi)$ via:

\begin{equation} \label{Zakharov} 
\left\{
\begin{aligned}
	& \partial_t\eta-G(\eta)\psi=0,\\
	& \partial_t\psi+g\eta-\sigma \textbf{H}(\eta)+\frac{1}{2}|\nabla\psi|^2-\frac{1}{2}\frac{(\nabla\eta\cdot \nabla\psi+G(\eta)\psi)^2}{1+|\nabla\eta|^2}=0.
\end{aligned}\right.
\end{equation}
Here $G$ denotes the Dirichlet to Neumann map associated to the fluid domain. This operator is one of the main analytical obstacles in this formulation of the problem, and in the next subsection we briefly discuss a change of coordinates that somewhat simplifies the analysis.
\\

We now write down the solitary wave equations. We begin with the equations \eqref{Domain}-\eqref{curvature} as well as the irrotationality condition, and assume that the profile is uniformly translating in the horizontal direction with velocity $c$, i.e., $\phi(x,y,t)=\phi_0(x-ct,y)$, $\eta(x,y,t)=\eta_0(x-ct,y),$ and $p(x,y,t)=p_0(x-ct,y).$ This gives the steady water wave equations. To get to solitary waves (as opposed to, say, periodic waves), we impose some averaged decay on $\eta_0$ and $u_0$, so that in the far-field the water levels out and is essentially still. Contrary to many works which use a frame of reference travelling with the localized disturbance, we choose a frame so that the fluid is at rest near infinity. This allows us to set to zero the integration constant in the Bernoulli equation; the price to pay is that there are terms with $c$ in the equations below.
\\

We are thus interested in states $(\eta,\phi)$ satisfying the following equations:

\begin{equation}\label{Laplace}
    \Delta\phi=0 \ \ \ \text{in} \ \Omega=\{(x,y)\in \mathbb{R}^2 : -h<y<\eta(x)\},
\end{equation}
\begin{equation}\label{Bernoulli}
    -c\phi_{x}+\frac{1}{2}|\nabla\phi|^2+g\eta-\sigma \partial_x\left(\frac{\eta_{x}}{\sqrt{1+\eta_{x}^2}}\right)=0\ \ \  \text{on} \  \Gamma=\{(x,y)\in \mathbb{R}^2 : y=\eta(x)\},
\end{equation}
\begin{equation}
    \phi_{y}=0\ \ \ \text{when} \ y=-h,
\end{equation}
\begin{equation}\label{Boundary phi}
    -c\eta_{x}+\phi_{x}\eta_{x}=\phi_{y}\ \ \  \text{on} \  \Gamma.
\end{equation}

We prove that the above equations admit no non-trivial solutions, with appropriate (averaged) decay at infinity. Such a claim, of course, presupposes certain regularity requirements on the solutions, but this will not play a major role due to ellipticity. Indeed, the above system 
can be shown to be locally elliptic whenever $(\eta,\phi)$ is above critical regularity, 
which corresponds to $\eta \in~H^{\frac32+}_{loc}$.
\\

\section{The equations in holomorphic coordinates}\label{Sec} As mentioned, one of the main difficulties of \eqref{Zakharov} is the presence of the Dirichlet to Neumann operator $G(\eta)$, which depends on the free boundary. For this reason, we will reformulate the equations in holomorphic coordinates, which, in some sense, diagonalizes $G(\eta)$. We will only highlight briefly the procedure of changing coordinates; full details can be found in \cite{BIT16}. Moreover, although \eqref{Zakharov} assumes that $\Gamma(t)$ is a graph, the formulation below does not require this, which is another advantage of this approach. 
\\

The conditions we require on $\Gamma(t)$ are the same (or weaker, see the discussion below) as those listed in Section 2.3 of \cite{BIT16}; namely, that $\Gamma(t)$ can be parametrized to have sufficient Sobolev regularity, has no degeneracies or self-intersections, and never touches the bottom boundary. These assumptions are used in \cite[Theorem 3]{BIT16} to justify the existence of the conformal map we refer to below.
\\

In the holomorphic setting, the coordinates are denoted by $\alpha+i\beta\in S:=\mathbb{R}\times (-h,0)$, and the fluid domain is parameterized by the conformal map
$$z:S\to \Omega(t),$$
which takes the bottom $\mathbb{R}-ih$ into the bottom, and the top $\mathbb{R}$ into the top $\Gamma(t)$. The restriction of this map to the real line is denoted by $Z$, i.e., $Z(\alpha):=z(\alpha-i0)$, and can be viewed as a parametrization of the free boundary $\Gamma(t)$. We will work with the variables $W(\alpha)=Z(\alpha)-\alpha$, and the trace $Q(\alpha)$ of the holomorphic velocity potential on the free surface. $W$ and $Q$ are traditionally called holomorphic functions, which in this terminology means that they can be realized as the trace on the upper boundary $\beta=0$ of holomorphic functions in the strip $S$ which are purely real on the lower boundary $\beta=-h$. The space of holomorphic functions is a real algebra, but is not a complex algebra.
\\

In terms of regularity, we note that the existence of the conformal map is guaranteed by the Riemann
Mapping Theorem  for any simply connected fluid domain. In order to have an equivalence between 
Sobolev norms, it suffices to assume that the free surface $\Gamma$ has critical Besov regularity $B^{\frac32}_{2,1}$. This, in particular, guarantees that $\Gamma$ is a graph outside of a compact set. The conformal map, then, has the matching property $\Im (W) \in B^{\frac32}_{2,1}$, and in particular 
$\Im (W)$ and  $W_\alpha$ are bounded. For more details we refer the reader to both  \cite[Section 2]{BIT16} and the stronger results in \cite{AIT}, as well as the more general local results of \cite{MR1986694}. 
\\

The two-dimensional finite depth gravity-capillary water wave equations in holomorphic coordinates can be written as follows:
\begin{equation}\label{holo}
\begin{cases}
 W_t+F(1+W_\alpha)=0,\\
 Q_t+FQ_\alpha-g\mathcal{T}_h[W]+\textbf{P}_h\left[\frac{|Q_\alpha|^2}{J}\right]+\sigma \textbf{P}_h\left[i\left(\frac{W_{\alpha\alpha}}{J^{1/2}(1+W_
 \alpha)}-\frac{\overline{W_{\alpha\alpha}}}{J^{1/2}(1+\overline{W_\alpha})}\right)\right]=0,
 \end{cases}
 \end{equation}

where \begin{equation}
    J=|1+W_\alpha|^2
\end{equation}
and \begin{equation}
    F=\textbf{P}_h\left[\frac{Q_\alpha-\overline{Q_\alpha}}{J}\right].
\end{equation}
As before, $g$ and $\sigma$ are non-negative parameters, at least one of which is non-zero. $\mathcal{T}_h$ denotes the Tilbert transform, which is the Fourier multiplier with symbol $-i\tanh(h\xi)$, and arises in order to characterize what it means to be a holomorphic function. Precisely, holomorphic functions are described by the relation
\begin{equation}
    \Im(u)=-\mathcal{T}_h \Re(u).
\end{equation}
It is important to note that the Tilbert transform takes real-valued functions to real-valued functions, and satisfies the following product rule:
\begin{equation}
    u\mathcal{T}_h[v]+\mathcal{T}_h[u]v=\mathcal{T}_h[uv-\mathcal{T}_h[u]\mathcal{T}_h[v]].
\end{equation}

Finally, $\textbf{P}_h$ is the projection onto the space of holomorphic functions. In terms of $\mathcal{T}_h$ it can be written as
\begin{equation}\label{proj form}
   \textbf{P}_hu=\frac{1}{2}\left[(1-i\mathcal{T}_h)\Re(u)+i(1+i\mathcal{T}_h^{-1})\Im(u)\right].
\end{equation}

In the case of no surface tension, equations \eqref{holo} were derived in \cite{BIT16}. We begin with a brief outline of how the surface tension term arises, as we are particularly interested in the case when $g=0$ and $\sigma>0$. 
\\

Following \cite{BIT16}, we arrive at the Bernoulli equation
\begin{equation}
    \phi_t+\frac{1}{2}|\nabla\phi|^2+gy+p=0.
\end{equation}
We then evaluate this equation on the top boundary and apply the dynamic boundary condition to replace $p$ by $-\sigma\textbf{H}.$ We then pass to the strip $S$ - so the equations are now defined on $\{\beta=0\}$ - rewrite the equations in terms of the holomorphic variables, clear common factors of $2$, and project. Running this procedure explicitly for the term with $\sigma$, we begin by parameterizing $\Gamma(t)$ by, say, $s\mapsto (\gamma_1(s),\gamma_2(s))$ and write $-\sigma \textbf{H}$ in the standard parametric way. We then use the relations

$$\gamma_1(s)=\Re(Z(\alpha)), \ \ \ \gamma_2(s)=\Im(Z(\alpha))$$
and formal calculations to write the capillary expression in terms of the holomorphic variables as: $$\sigma i\left(\frac{W_{\alpha\alpha}}{J^{1/2}(1+W_
 \alpha)}-\frac{\overline{W_{\alpha\alpha}}}{J^{1/2}(1+\overline{W_\alpha})}\right),$$
 which after projecting gives us the capillary term in \eqref{holo}.

\begin{rem}\label{Ambig}
Before proceeding, we would like to point out some inherent ambiguities of the above equations, which have to be properly interpreted. The first stems from the horizontal translation symmetry of the strip, which causes some arbitrariness in the choice of conformal mapping; precisely, $\Re( W)$ is only determined up to constants. A related issue is in the definition of the inverse Tilbert transform, as the Tilbert transform does not see constants. These ambiguities are built into the function spaces of \cite{BIT16}, and play a much less significant role in our analysis than in the dynamic problem. Of course, a related, but easily resolved, ambiguity is that $Q$ (and $\phi$) are only defined up to addition of a real constant.
\end{rem}

\begin{rem}\label{Properties of the conformal map.}
There are a few additional properties of $z$ that we will note, all of which have been essentially verified in the proof of \cite[Theorem 3]{BIT16}. The first is that the parameterization essentially moves ``from left to right" or, more specifically, the parameterization on top satisfies $\frac{d\alpha}{ds}>0$. This was implicitly used above in the derivation of the capillary term. Next, since $z$ is holomorphic and a diffeomorphism,  $|z_\alpha|>0$ on $S$, which combined with the asymptotics at infinity implies that there is a $\delta>0$ such that $|1+W_\alpha|=|Z_\alpha|\geq \delta$ on top. Note that we only require positivity conditions on $|1+W_\alpha|;$ the boundary being a graph would assume positivity of $1+\Re(W_\alpha)$.
\end{rem}

\subsection{The solitary wave equations}
In search for solitary wave solutions we fix a speed $c$ and make the ansatz $(Q(\alpha,t),W(\alpha,t))=(Q(\alpha-ct),W(\alpha-ct))$. The first equation in \eqref{holo} then becomes
\begin{equation}
    -cW_\alpha+F(1+W_\alpha)=0
\end{equation}
while the second equation becomes
\begin{equation}\label{second solitary}
    -cQ_\alpha+FQ_\alpha-g\mathcal{T}_h[W]+\textbf{P}_h\left[\frac{|Q_\alpha|^2}{J}\right]+\sigma \textbf{P}_h\left[i\left(\frac{W_{\alpha\alpha}}{J^{1/2}(1+W_
 \alpha)}-\frac{\overline{W_{\alpha\alpha}}}{J^{1/2}(1+\overline{W_\alpha})}\right)\right]=0.
\end{equation}

We rewrite the first equation as
\begin{equation}
    F=\textbf{P}_h\left[\frac{Q_\alpha-\overline{Q_\alpha}}{J}\right]=\frac{cW_\alpha}{1+W_\alpha}.
\end{equation}
This gives that
\begin{equation}
    \Im\left[\textbf{P}_h\left[\frac{Q_\alpha-\overline{Q_\alpha}}{J}\right]\right]=c\Im\left(\frac{W_\alpha}{1+W_\alpha}\right)=\frac{c}{J}\Im\left(W_\alpha(1+\overline{W_\alpha})\right)=\frac{c}{J}\frac{W_\alpha-\overline{W_\alpha}}{2i}.
\end{equation}
Recalling \eqref{proj form} and that the Tilbert transform maps real-valued functions to real-valued functions, we have

\begin{equation}
    \Im(\textbf{P}_hu)=\frac{1}{2}\left[\Im(u)-\mathcal{T}_h\Re(u)\right].
\end{equation}
Therefore,
\begin{equation}
     \Im\left[\textbf{P}_h\left[\frac{Q_\alpha-\overline{Q_\alpha}}{J}\right]\right]=\frac{1}{2}\Im\left(\frac{Q_\alpha-\overline{Q_\alpha}}{J}\right)=\frac{Q_\alpha-\overline{Q_\alpha}}{2iJ}.
\end{equation}
The equation we end up with is, then,
\begin{equation}
    \frac{Q_\alpha-\overline{Q_\alpha}}{2J}=\frac{c}{2}\frac{(W_\alpha-\overline{W_\alpha})}{J},
\end{equation}
which simplifies to 
\begin{equation}
   \Im(Q_\alpha)=c\Im(W_\alpha),
\end{equation}
so that
\begin{equation}\label{First solitary simple}
    Q_\alpha=cW_\alpha
\end{equation}
because $Q$ and $W$ are holomorphic.  Note that, formally, this argument only tells us that $Q_\alpha=cW_\alpha$ up to addition of a real constant. However, the decay properties of $(W_{\alpha},Q_{\alpha})$ at infinity require the constant to vanish.
\\

We now begin to simplify the second water wave equation. Beginning with \eqref{second solitary}, substituting \eqref{First solitary simple} and the definition of $F$ gives:
\begin{equation}\label{WW2 first step}
    -c^2W_\alpha+\frac{c^2W_\alpha^2}{1+W_\alpha}-g\mathcal{T}_h[W]+c^2\textbf{P}_h\left[\frac{|W_\alpha|^2}{J}\right]+\sigma \textbf{P}_h\left[i\left(\frac{W_{\alpha\alpha}}{J^{1/2}(1+W_
 \alpha)}-\frac{\overline{W_{\alpha\alpha}}}{J^{1/2}(1+\overline{W_\alpha})}\right)\right]=0.
\end{equation}
Before continuing, we note a few things. First, we have
\begin{equation}
    \textbf{P}_h\left[\frac{|W_\alpha|^2}{J}\right]=\frac{1}{2}\left[(1-i\mathcal{T}_h)\frac{|W_\alpha|^2}{J}\right].
\end{equation}
This implies that 
\begin{equation}
    \Re\left( \textbf{P}_h\left[\frac{|W_\alpha|^2}{J}\right]\right)=\frac{1}{2}\frac{|W_\alpha|^2}{J}.
\end{equation}
Therefore, taking real part of \eqref{WW2 first step} and then using the fact that holomorphic functions satisfy $\mathcal{T}_h\left[\Re(u)\right]=-\Im(u)$ we obtain:

\begin{equation}
      -c^2\Re(W_\alpha)+c^2\Re\left(\frac{W_\alpha^2}{1+W_\alpha}\right)+g\Im(W)+\frac{c^2}{2}\frac{|W_\alpha|^2}{J}+\frac{\sigma}{2}i\left( \frac{W_{\alpha\alpha}}{J^{1/2}(1+W_
 \alpha)}-\frac{\overline{W_{\alpha\alpha}}}{J^{1/2}(1+\overline{W_\alpha})}\right)=0,
\end{equation}
which can be re-written as
\begin{equation}
      -c^2\Re(W_\alpha)+c^2\Re\left(\frac{W_\alpha^2}{1+W_\alpha}\right)+g\Im(W)+\frac{c^2}{2}\frac{|W_\alpha|^2}{J}+\frac{i\sigma}{1+W_\alpha}\partial_\alpha\left(\frac{1+W_\alpha}{|1+W_\alpha|}\right)=0.
\end{equation}
After straightforward manipulation of the terms with $c^2$ we arrive at 
\begin{equation}\label{Same eq}
    -\frac{c^2}{2}\frac{(W_\alpha+\overline{W_\alpha}+W_\alpha\overline{W_\alpha})}{|1+W_\alpha|^2}+ g\Im(W)+\frac{i\sigma}{1+W_\alpha}\partial_\alpha\left(\frac{1+W_\alpha}{|1+W_\alpha|}\right)=0.
\end{equation}

As it turns out, these are exactly the same equations as the infinite-depth case considered in \cite{IT19}. However, the function spaces are different, which plays a key role. In particular, as mentioned in the introduction, there are no infinite depth pure gravity solitary waves, but there are finite depth pure gravity solitary waves.
\\

As a consistency check, we leave it as an exercise to show that \eqref{Laplace}-\eqref{Boundary phi} imply \eqref{Same eq}. 
\subsection{Notation for function spaces}
The function spaces we use are standard, and similar to \cite{IT18}. However, to set notation, we recall a few facts:
\\

Consider a standard dyadic Littlewood-Paley decomposition
$$1=\sum_{k\in \mathbb{Z}}P_k,$$
where the projectors $P_k$ select functions with frequencies $\approx 2^k$. We will place our (hypothetical) solutions in the critical Besov space $B^\frac{1}{2}_{2,1}$ defined via

$$\|u\|_{B^\frac{1}{2}_{2,1}}:=\sum_{k\geq 1}2^\frac{k}{2}\|P_k u\|_{L^2}+\|P_{\leq 0}u\|_{L^2}.$$
Our proof also makes use of the space $B^\frac{3}{2}_{2,1}$, which has the same norm as $B^\frac{1}{2}_{2,1}$ but with $2^\frac{k}{2}$ replaced by $2^\frac{3k}{2}$. Finally, we note the embedding of $B^\frac{1}{2}_{2,1}$ into $L^\infty$, and the following Moser estimate:

\begin{lem}\label{Moser}
Let $u\in B^\frac{1}{2}_{2,1}$, and suppose $G$ is a smooth function with $G(0)=0.$ Then we have the Moser estimate
\begin{equation}
    \|G(u)\|_{B^\frac{1}{2}_{2,1}}\lesssim C(\|u\|_{L^\infty})\|u\|_{B^\frac{1}{2}_{2,1}}.
\end{equation}
\end{lem}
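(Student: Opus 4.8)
The statement to prove is the Moser estimate in Lemma~\ref{Moser}: for $u \in B^{1/2}_{2,1}$ and $G$ smooth with $G(0)=0$, one has $\|G(u)\|_{B^{1/2}_{2,1}} \lesssim C(\|u\|_{L^\infty})\|u\|_{B^{1/2}_{2,1}}$.

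\textbf{Approach.} The plan is to run the standard paraproduct/telescoping argument adapted to the summability exponent $q=1$ in the Besov scale, exploiting that $B^{1/2}_{2,1} \hookrightarrow L^\infty$ so that $\|u\|_{L^\infty}$ is finite and controls all the nonlinear factors. The key structural device is to write $G(u)$ as a telescoping sum over the Littlewood-Paley scales. Denoting $S_k = P_{\leq k}$ the partial sums, I would expand
\begin{equation}
    G(u) = G(S_0 u) + \sum_{k \geq 1} \bigl( G(S_k u) - G(S_{k-1} u) \bigr),
\end{equation}
and then use the fundamental theorem of calculus on each increment, writing $S_k u - S_{k-1} u = P_k u$ and
\begin{equation}
    G(S_k u) - G(S_{k-1} u) = P_k u \int_0^1 G'\bigl( S_{k-1} u + t\, P_k u \bigr)\, dt =: P_k u \cdot m_k,
\end{equation}
where each multiplier $m_k$ is a smooth function of $u$-data bounded by $\sup_{|s| \leq \|u\|_{L^\infty}} |G'(s)| \leq C(\|u\|_{L^\infty})$ in $L^\infty$.

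\textbf{Key steps in order.} First I would record the $L^\infty$ embedding and fix the constant $C(\|u\|_{L^\infty})$ governing $G'$ and its derivatives on the interval $[-\|u\|_{L^\infty}, \|u\|_{L^\infty}]$. Second, I would estimate the low-frequency piece $G(S_0 u)$ directly, using $G(0)=0$ and the mean value theorem so that $\|G(S_0 u)\|_{L^2} \lesssim C(\|u\|_{L^\infty}) \|S_0 u\|_{L^2} \lesssim C(\|u\|_{L^\infty})\|P_{\leq 0} u\|_{L^2}$, which matches the low-frequency term of the $B^{1/2}_{2,1}$ norm. Third, for the high-frequency part I would apply $P_j$ to the telescoping sum and estimate $\|P_j G(u)\|_{L^2}$. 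The main point is that $m_k$ is a smooth, bounded, \emph{spectrally spread} factor: its Littlewood-Paley pieces decay, so the frequency-localized product $P_j(P_k u \cdot m_k)$ is negligible unless $j \lesssim k$ (or $j \sim k$ when both inputs are high). Summing $2^{j/2}\|P_j G(u)\|_{L^2}$ over $j \geq 1$ and exchanging the order of summation in $j$ and $k$, the geometric gains in $2^{(j-k)/2}$ convert the double sum into a single sum $\sum_{k \geq 1} 2^{k/2}\|P_k u\|_{L^2}$, i.e. the $B^{1/2}_{2,1}$ norm, with the nonlinear constant factored out. To control $m_k$ and its frequency content I would use that $m_k$ is built from $G'$ composed with $S_{k-1}u + tP_k u$, whose spatial derivatives are bounded by $C(\|u\|_{L^\infty})$ times positive powers of $2^k$ coming from the frequency truncation, together with Bernstein's inequality.

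\textbf{Main obstacle.} The delicate part is the careful bookkeeping of the frequency interactions in the third step: ensuring that the off-diagonal contributions (where $j$ and $k$ are far apart) are summable with a net geometric gain, and that the diagonal contributions $j \sim k$ contribute exactly one factor of the $B^{1/2}_{2,1}$ norm rather than, say, a factor that is only borderline summable. Because the summability index is $q = 1$, there is no room to absorb a logarithmic loss, so the argument must produce genuine geometric decay in $|j-k|$ rather than mere boundedness; this is where the smoothness of $G$ and the resulting rapid decay of the high-frequency pieces of $m_k$ are essential. A clean way to organize this, which I would adopt, is to treat $G(u)$ via the Bony paraproduct decomposition into the three pieces $T_u^{G'}$-type low-high, high-low, and resonant terms, bounding each using the algebra and Moser structure of $B^{1/2}_{2,1}$ as a multiplicative space; the resonant term is the one requiring the most care, precisely because of the sharp $q=1$ summation.
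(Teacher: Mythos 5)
Your proposal is correct in substance, but it is worth noting that it is a genuine proof, whereas the paper offers none: the paper's entire ``proof'' of Lemma~\ref{Moser} is a citation, namely that the estimate follows from \cite[Lemma 2.2]{IT18} together with the analogous Moser estimate at the level of $L^2$. What you wrote out --- the telescoping decomposition $G(u)=G(S_0u)+\sum_{k\geq 1}\bigl(G(S_ku)-G(S_{k-1}u)\bigr)$, the fundamental theorem of calculus to extract $P_ku\cdot m_k$ with $m_k$ bounded by $C(\|u\|_{L^\infty})$ and with derivative bounds $\|\partial^N m_k\|_{L^\infty}\lesssim C(\|u\|_{L^\infty})2^{kN}$, and the $j$ versus $k$ bookkeeping with geometric gains --- is exactly the classical Meyer-style argument that underlies such citations, and your emphasis on needing genuine geometric decay in $|j-k|$ (because $q=1$ leaves no room for logarithmic losses) is the right key point. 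So your route is more self-contained and makes the mechanism visible, at the cost of length; the paper's route is shorter but defers entirely to the literature. Two small points to tighten: first, the $L^\infty$ bound on $m_k$ should be $\sup_{|s|\leq C\|u\|_{L^\infty}}|G'(s)|$ with an absolute constant $C$ reflecting the (non-unital) $L^\infty$ bounds of the projections $S_{k-1}$ and $P_k$; second, your treatment of the low-frequency piece $G(S_0u)$ only bounds its $L^2$ norm, but its full $B^{1/2}_{2,1}$ norm is needed --- this is handled by the same smoothness argument, since $\|P_jG(S_0u)\|_{L^2}\lesssim 2^{-jN}\|\partial^N G(S_0u)\|_{L^2}\lesssim 2^{-jN}C(\|u\|_{L^\infty})\|u\|_{L^2}$ for any $N$, which is summable against $2^{j/2}$. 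Neither issue is a genuine gap.
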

\begin{proof}
This is a standard result. For example, it follows from \cite[Lemma 2.2]{IT18} together with the analagous Moser estimate on the level of $L^2$.
\end{proof}

\section{No solitary waves when only surface tension is present}

We are now able to state our main theorem. The result is stated in the low regularity function space $B^\frac{1}{2}_{2,1}$ defined above. However, part of the proof involves upgrading potential solutions to sufficient regularity to justify basic computations. Comparing with the infinite depth results in \cite{IT19}, our function space requires more regularity for $W_\alpha$ at low frequency, but this is to be expected, as the same happens in the dynamic problem \cite{BIT16}. From a technical standpoint, the issue is that $\mathcal{T}_h^{-1}$ does not have good mapping properties (it is not even bounded on $L^2$) compared to  the Hilbert transform, which satisfies $H^{-1}=-H.$  For justification of the other assumption - and conclusion - of \Cref{Main}, recall \Cref{Ambig} and \Cref{Properties of the conformal map.}.

\begin{thm}\label{Main}
Suppose $W_\alpha\in B^\frac{1}{2}_{2,1}$ is holomorphic, solves  \eqref{Same eq} with $g=0$ and $\sigma>0$, $|1+W_\alpha|>\delta>0$ on the top, and its extension does not vanish on $\overline{S}$. Then $W_\alpha=0$.

\end{thm}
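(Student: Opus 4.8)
The plan is to exploit the specific structure of \eqref{Same eq} when $g=0$ and $\sigma>0$, namely
\begin{equation*}
    -\frac{c^2}{2}\frac{W_\alpha+\overline{W_\alpha}+W_\alpha\overline{W_\alpha}}{|1+W_\alpha|^2}+\frac{i\sigma}{1+W_\alpha}\partial_\alpha\left(\frac{1+W_\alpha}{|1+W_\alpha|}\right)=0,
\end{equation*}
and to extract from it a conserved or monotone quantity whose integral forces $W_\alpha \equiv 0$ under the decay encoded in $W_\alpha \in B^{1/2}_{2,1}$. First I would multiply through by $1+W_\alpha$ to clear the denominator in the capillary term, turning the equation into a relation of the form $\sigma\, i\,\partial_\alpha\!\left(\frac{1+W_\alpha}{|1+W_\alpha|}\right) = \frac{c^2}{2}\frac{(W_\alpha+\overline{W_\alpha}+|W_\alpha|^2)(1+W_\alpha)}{|1+W_\alpha|^2}$. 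The left side is a perfect derivative of a purely geometric quantity (the unit tangent direction of the curve), which I would integrate against a suitable holomorphic test multiplier. The holomorphicity constraint $\Im(u)=-\mathcal{T}_h\Re(u)$ should then be used to convert the resulting boundary pairing into a quantity with a definite sign.

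The heart of the argument, following the strategy of \cite{IT19}, should be to pair the equation with $W$ (or $\overline{W}$, or $1$) in $L^2$ and integrate over $\mathbb{R}$, using that $\mathcal{T}_h$ is skew-adjoint to move it onto the other factor via the product rule stated after \eqref{proj form}. The key difference from \cite{IT19}, as the authors flag, is that $\mathcal{T}_h$ does not satisfy $\mathcal{T}_h^{-1}=-\mathcal{T}_h$; so whereas the Hilbert transform case gives a clean commutator cancellation, here I would need to split into frequency regimes. At high frequency $\mathcal{T}_h$ behaves like the Hilbert transform, so I expect the commutator estimate of \cite[Lemma 3.1]{IT19} to transfer with minor changes; at low frequency, $\tanh(h\xi)\approx h\xi$, so $\mathcal{T}_h$ behaves like $h\partial_\alpha$, a genuine derivative. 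I would handle the low-frequency contribution by treating the Tilbert transform as $h\partial_\alpha$ plus a smoothing remainder, so that the corresponding term in the paired identity becomes an exact derivative integrating to zero (by decay), with the remainder controlled by the $B^{1/2}_{2,1}$ norm via the Moser estimate of \Cref{Moser}.

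Once the pairing identity is assembled, I expect it to take the form of a single integral that is manifestly one-signed — schematically $\int |\text{something}(W_\alpha)|^2 \,d\alpha = 0$ — from which $W_\alpha=0$ follows. To run this rigorously I would first need the regularity bootstrap alluded to before the theorem statement: using the ellipticity of \eqref{Laplace}--\eqref{Boundary phi} and the nonvanishing condition $|1+W_\alpha|>\delta$, upgrade a hypothetical $B^{1/2}_{2,1}$ solution to enough smoothness (e.g.\ $B^{3/2}_{2,1}$ and beyond) that all integrations by parts and applications of the product rule are legitimate. The nonvanishing of the holomorphic extension of $1+W_\alpha$ on $\overline{S}$ should be what allows $\log(1+W_\alpha)$ and $\frac{1+W_\alpha}{|1+W_\alpha|}$ to be treated as honest holomorphic/bounded functions in these manipulations.

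\textbf{Main obstacle.} The principal difficulty will be the low-frequency analysis of $\mathcal{T}_h$. Because $\mathcal{T}_h^{-1}$ is unbounded on $L^2$ and $\mathcal{T}_h$ degenerates to a derivative at low frequency, the clean commutator cancellation available in the infinite-depth Hilbert-transform setting is lost, and the sign-definiteness of the final integral identity is exactly what is at risk in that regime. Making the ``Hilbert transform at high frequency, derivative at low frequency'' heuristic quantitative — so that the low-frequency derivative term integrates to zero and the high-frequency part inherits the \cite[Lemma 3.1]{IT19} commutator bound with the correct $B^{1/2}_{2,1}$ weights — is where the real work lies.
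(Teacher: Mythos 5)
Your proposal captures the right ingredients in outline (skew-adjointness and the product rule for $\mathcal{T}_h$, the ``Hilbert transform at high frequency, derivative at low frequency'' heuristic, a regularity bootstrap), but it is missing the one reduction that makes the argument actually work, and your anticipated low-frequency mechanism is not the correct one. The paper never pairs the equation with $W$, $\overline{W}$, $1$, or any ``holomorphic test multiplier.'' Instead, using the nonvanishing of the extension of $1+W_\alpha$ on $\overline{S}$, it introduces the holomorphic logarithm $T=\log(1+W_\alpha)=U+iV$ (chosen real on the bottom), substitutes $1+W_\alpha=e^{U+iV}$ into \eqref{Same scaled}, and observes that the whole equation collapses to a single real scalar identity, $-\sigma V_\alpha = 2c^2\sinh(U)$, i.e.\ $\sigma\mathcal{T}_h U_\alpha = 2c^2\sinh U$ by holomorphicity. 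The multiplier is then $-\chi_r U_\alpha$, a cutoff times $U_\alpha$, chosen precisely because $U_\alpha\sinh U=\partial_\alpha\left(\cosh U-1\right)$ is an exact derivative with a \emph{nonnegative} antiderivative; this is the sole source of the sign-definiteness you are hoping for. Pairing the original equation with $W$ has no reason to produce any coercive quantity, and without the log variables you cannot even write down the scalar equation \eqref{No solutions} that the rest of the proof analyzes. (This same log reduction is the engine of \cite{IT19}, which you cite for strategy but whose structure your proposal does not reproduce; the regularity bootstrap is also run on $U,V$ via \Cref{Moser}, not via ellipticity of the Eulerian system.)

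Second, your low-frequency plan is backwards. You propose to treat $\mathcal{T}_h$ as $h\partial_\alpha$ plus a remainder ``so that the corresponding term in the paired identity becomes an exact derivative integrating to zero.'' In the actual proof, after the product rule and skew-adjointness reduce the left side to $\frac12\int(|U_\alpha|^2-|\mathcal{T}_h U_\alpha|^2)\,\mathcal{T}_h\chi_r \,d\alpha$, one writes $\mathcal{T}_h\chi_r=(\mathcal{T}_h+h\partial_\alpha)\chi_r-\tfrac{h}{r}\chi'(\alpha/r)$, and the derivative piece does \emph{not} vanish as $r\to\infty$: after rescaling by $r$, both $2c^2\int\chi'(\alpha/r)(\cosh U-1)\,d\alpha$ and $\frac{h}{2}\int(|U_\alpha|^2-|\mathcal{T}_h U_\alpha|^2)\chi'(\alpha/r)\,d\alpha$ survive, and the limiting identity says their \emph{sum} is zero. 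The proof closes only because each term is separately nonnegative — the first since $\cosh U-1\ge 0$, the second by Plancherel, since
\begin{equation*}
\int_\mathbb{R}(|U_\alpha|^2-|\mathcal{T}_h U_\alpha|^2)\,d\alpha=\int_\mathbb{R} |\xi|^2|\widehat U|^2\,\mathrm{sech}^2(h\xi)\,d\xi\ \ge\ 0,
\end{equation*}
and verifying this favorable sign is exactly the step your plan skips by assuming the term is zero. Finally, your worry about transferring \cite[Lemma 3.1]{IT19} is moot: the paper uses no commutator estimate at all. The main term is handled by the exact algebraic product rule for $\mathcal{T}_h$, and the only analytic estimate in the proof is the Plancherel bound $\|(\tanh(h\xi)-h\xi)\widehat{\chi_r}\|_{L^2}\lesssim r^{-3/2}$ for the error $(\mathcal{T}_h+h\partial_\alpha)\chi_r$, which makes the discarded piece $\mathcal{O}(r^{-1/2})$; the $(\mathcal{T}_h+h\partial_\alpha)$ splitting exists precisely because the commutator route of \cite{IT19} is unavailable for $\mathcal{T}_h$.
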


\begin{proof}
We work with the equation
\begin{equation}\label{Same scaled}
    i\sigma\partial_\alpha\left(\frac{1+W_\alpha}{|1+W_\alpha|}\right) =c^2\left[W_\alpha+\frac{\overline{W_\alpha}}{1+\overline{W_\alpha}}\right],
\end{equation}
which holds on the top and  is just a rescaling of \eqref{Same eq} with $g=0.$ 
\\

For what follows we slightly abuse notation by not distinguishing, notationally, between $1+W_\alpha$ and its extension to the strip.
First note that since $1+W_\alpha$ is non-vanishing on the simply connected domain $S$, it admits a holomorphic logarithm. However, one has to be a little careful, to ensure that it is real on the bottom boundary. To see this, note that since, on the bottom, $1+W_\alpha$ is real, non-vanishing and has limit $1$ at infinity, it is  positive on the bottom.
\\

Define
\begin{equation}\label{log}
T:=\log(1+W_{\alpha}):=U+iV.
\end{equation}
It is easy to see that $T$ can be chosen to be holomorphic; in particular, it can be chosen to be real on the bottom. 
\\

Plugging into \eqref{Same scaled} we see that
\begin{equation}\label{Moser upgrade}
    -\sigma V_\alpha e^{iV}=c^2\left[W_\alpha+\frac{\overline{W_\alpha}}{1+\overline{W_\alpha}}\right]=c^2\left(e^{U+iV}-e^{U-iV}\right).
\end{equation}
This implies that 
\begin{equation}\label{Sinh}
    -\sigma V_\alpha=2c^2\sinh(U).
\end{equation}
Now, we upgrade regularity. By \eqref{log}, $|1+W_\alpha|>\delta,$ and \Cref{Moser}, it follows that $U,V \in B^{\frac{1}{2}}_{2,1}$. Again by Moser, we obtain 
$\sinh(U)\in  B^\frac{1}{2}_{2,1}$ which in turn implies that $V_\alpha \in B^\frac{1}{2}_{2,1}\subseteq L^2$.  From this we get $P_{>0} U_\alpha =-P_{>0}\mathcal{T}_h^{-1}V_\alpha \in B^{\frac{1}{2}}_{2,1}$. But since $U\in L^2$, it follows that $U_\alpha \in B^{\frac{1}{2}}_{2,1}$. This will be enough regularity to justify the calculations below, though $H^\infty$ regularity for $U$ and $V_\alpha$ could be obtained by reiteration.
\\

Rescaling again and using that $-V_\alpha=\mathcal{T}_hU_\alpha$, it suffices to show that the equation
\begin{equation}\label{No solutions}
    \mathcal{T}_hU_\alpha=2c^2\sinh U
\end{equation}
admits no non-zero $B_{2,1}^{\frac{3}{2}}$ solutions. For this, we let $\chi$ be a smooth function with $\chi=0$ on $(-\infty,-1]$ and $\chi=1$ on $[1,\infty)$ with $\chi'\sim 1$ on $(-\frac{1}{2},\frac{1}{2})$.  Define $\chi_r(\alpha)=\chi(\frac{\alpha}{r})$. 
\\
\\
Next, we multiply \eqref{No solutions} by $-\chi_rU_\alpha$, and obtain
\begin{equation}
    -\chi_rU_\alpha \mathcal{T}_hU_\alpha= -2c^2\chi_rU_\alpha\sinh U=-2c^2\chi_r\partial_\alpha\left( \cosh(U)-1\right).
\end{equation}
An integration by parts yields the following identity:
\begin{equation}\label{Left massaged}
    -\int_\mathbb{R}\chi_rU_\alpha \mathcal{T}_hU_\alpha d\alpha=\frac{2c^2}{r}\int_\mathbb{R}\chi'(\frac{\alpha}{r})(\cosh(U)-1)d\alpha.
\end{equation}
Now, we treat the term on the left hand side of \eqref{Left massaged}. From the product rule for the Tilbert transform we have
\begin{equation}
    \chi_r\mathcal{T}_hU_\alpha=\mathcal{T}_h(\chi_rU_\alpha)-\mathcal{T}_h(\mathcal{T}_h\chi_r\mathcal{T}_hU_\alpha)-U_\alpha \mathcal{T}_h\chi_r.
\end{equation}

Hence, using that the Tilbert transform is skew-adjoint and maps real-valued functions to real-valued functions,
\begin{equation}
\begin{split}
-\int_\mathbb{R}\chi_rU_\alpha \mathcal{T}_hU_\alpha d\alpha&=\int_\mathbb{R} U_\alpha \mathcal{T}_h(\mathcal{T}_h\chi_r \mathcal{T}_hU_\alpha )d\alpha+\int_\mathbb{R}|U_\alpha|^2\mathcal{T}_h\chi_r d\alpha-\int_\mathbb{R}U_\alpha \mathcal{T}_h(\chi_r U_\alpha)d\alpha
\\
&=\int_\mathbb{R} U_\alpha \mathcal{T}_h(\mathcal{T}_h\chi_r \mathcal{T}_hU_\alpha )d\alpha+\int_\mathbb{R}|U_\alpha|^2\mathcal{T}_h\chi_r d\alpha+\int_\mathbb{R}\chi_rU_\alpha \mathcal{T}_hU_\alpha d\alpha
\\
&=-\int_\mathbb{R} |\mathcal{T}_h U_\alpha|^2 \mathcal{T}_h\chi_r d\alpha+\int_\mathbb{R}|U_\alpha|^2\mathcal{T}_h\chi_r d\alpha+\int_\mathbb{R}\chi_rU_\alpha \mathcal{T}_hU_\alpha d\alpha.
\end{split}
\end{equation}
Hence, we obtain
\begin{equation}
    -\int_\mathbb{R}\chi_rU_\alpha \mathcal{T}_hU_\alpha d\alpha=\frac{1}{2}\int_\mathbb{R}(|U_\alpha|^2-|\mathcal{T}_hU_\alpha|^2)\mathcal{T}_h\chi_r d\alpha.
\end{equation}
Combining this with \eqref{Left massaged}, we get
\begin{equation}
    \frac{2c^2}{r}\int_\mathbb{R}\chi'(\frac{\alpha}{r})(\cosh(U)-1)d\alpha=\frac{1}{2}\int_\mathbb{R}(|U_\alpha|^2-|\mathcal{T}_hU_\alpha|^2)\mathcal{T}_h\chi_r d\alpha.
\end{equation}
The idea now is to use the fact that at low frequency, the Tilbert transform agrees with the multiplier $\xi \mapsto-hi\xi$ to third order. With this in mind, we rewrite the above equation as follows:
\begin{equation}
\begin{split}
    \frac{2c^2}{r}\int_\mathbb{R}\chi'(\frac{\alpha}{r})(\cosh(U)-1)d\alpha&=\frac{1}{2}\int_\mathbb{R}(|U_\alpha|^2-|\mathcal{T}_hU_\alpha|^2)(\mathcal{T}_h+h\partial_\alpha)\chi_r d\alpha
    \\
    &-\frac{h}{2r}\int_\mathbb{R}(|U_\alpha|^2-|\mathcal{T}_hU_\alpha|^2)\chi'(\frac{\alpha}{r})d\alpha.
\end{split}
\end{equation}
Equivalently, we have
\begin{equation}\label{low freq}
\begin{split}
    2c^2\int_\mathbb{R}\chi'(\frac{\alpha}{r})(\cosh(U)-1)d\alpha&+\frac{h}{2}\int_\mathbb{R}(|U_\alpha|^2-|\mathcal{T}_hU_\alpha|^2)\chi'(\frac{\alpha}{r})d\alpha
    \\
    &=\frac{r}{2}\int_\mathbb{R}(|U_\alpha|^2-|\mathcal{T}_hU_\alpha|^2)(\mathcal{T}_h+h\partial_\alpha)\chi_r d\alpha.
\end{split}
\end{equation}
We are now in a position to estimate the right hand side of (\ref{low freq}). Indeed, by Cauchy Schwarz and Sobolev embedding, we have,
\begin{equation}
\begin{split}
\frac{r}{2}\left|\int_\mathbb{R}(|U_\alpha|^2-|\mathcal{T}_hU_
\alpha|^2(\mathcal{T}_h+h\partial_\alpha)\chi_rd\alpha\right|&\leq Cr(\|U_\alpha\|_4^2+\|\mathcal{T}_hU_\alpha\|_4^2)\|(\mathcal{T}_h+h\partial_\alpha)\chi_r\|_2
\\
&\leq Cr\|U\|^2_{B^{\frac{3}{2}}_{2,1}}\|(\mathcal{T}_h+h\partial_\alpha)\chi_r\|_2.
\end{split}
\end{equation}
Using Plancherel's Theorem we then obtain the simple estimate,
\begin{equation}
\begin{split}
r\|U\|^2_{B^{\frac{3}{2}}_{2,1}}\|(\mathcal{T}_h+h\partial_\alpha)\chi_r\|_2 &=Cr\|U\|_{B^{\frac{3}{2}}_{2,1}}^2\|(\tanh(h\xi)-h\xi)\widehat{\chi_r}\|_2
\\
&\leq \frac{C}{r}\|U\|_{B^{\frac{3}{2}}_{2,1}}^2\bigg\|\frac{\tanh(h\xi)-h\xi}{\xi^2}\bigg\|_{L^\infty}\|\chi''(\frac{\alpha}{r})\|_2
\\
&\leq \frac{C}{r^{1/2}}\|U\|_{B^{\frac{3}{2}}_{2,1}}^2\|\chi''\|_2.
\end{split}
\end{equation}
Hence, we obtain
\begin{equation}
2c^2\int_\mathbb{R}\chi'(\frac{\alpha}{r})(\cosh(U)-1)d\alpha+\frac{h}{2}\int_\mathbb{R}(|U_\alpha|^2-|\mathcal{T}_hU_\alpha|^2)\chi'(\frac{\alpha}{r})d\alpha=\mathcal{O}_{\|U\|_{B^{\frac{3}{2}}_{2,1}}}(r^{-1/2}).
\end{equation}
Letting $r\to\infty$, dominated convergence gives
$$2c^2\int_\mathbb{R}(\cosh(U)-1)d\alpha=-\frac{h}{2}\int_\mathbb{R}(|U_\alpha|^2-|\mathcal{T}_hU_\alpha|^2)d\alpha=-\frac{h}{2}\int_\mathbb{R}|\xi|^2|\widehat{U}|^2 \text{sech}^2(h\xi)\leq 0.$$
Therefore, since $\cosh(U)-1\geq 0,$ we have 
$$\cosh(U)=1,$$
so that $U\equiv 0$. Note that taking the limit is justified because $\cosh(U)-1$ is integrable. This is thanks to the fact that $U$ is bounded, vanishes at infinity, and belongs to $L^2$.
\end{proof}

\bibliographystyle{plain}
\bibliography{cap.bib}

\end{document}